\documentclass[12pt]{article}

\usepackage{amsfonts,amssymb,amsmath,amsthm}
\setlength{\textwidth}{6.3in}
\setlength{\textheight}{8.7in}
\setlength{\topmargin}{0pt}
\setlength{\headsep}{0pt}
\setlength{\headheight}{0pt}
\setlength{\oddsidemargin}{0pt}
\setlength{\evensidemargin}{0pt}

\usepackage{tikz}
\usetikzlibrary{matrix,arrows}
\usetikzlibrary{positioning}
\usetikzlibrary{fit}
\usetikzlibrary{patterns}

\newtheorem{theorem}{Theorem}
\newtheorem{lemma}[theorem]{Lemma}

\newtheorem{proposition}[theorem]{Proposition}

\long\def\symbolfootnote[#1]#2{\begingroup
\def\thefootnote{\fnsymbol{footnote}}\footnote[#1]{#2}\endgroup}




\title{On the Representability of Line Graphs}

\author{Sergey Kitaev\thanks{Supported by a grant from Iceland, Liechtenstein and Norway through the EEA Financial Mechanism. Supported and coordinated by Universidad Complutense de Madrid.}
\and
Pavel Salimov\thanks{Supported by the Russian Foundation for Basic Research grant no.\ 10-01-00616 and by the Icelandic Research Fund grant no.\ 090038013.}
\and
Christopher Severs\thanks{Supported by grant no.\ 090038012 from the Icelandic Research Fund.}
\and
Henning \'Ulfarsson\thanks{Supported by grant no.\ 090038011 from the Icelandic Research Fund.}}

\begin{document}

\maketitle

\begin{abstract}

A graph $G=(V,E)$ is representable if there exists a word $W$ over the alphabet $V$ such that letters $x$ and $y$ alternate in $W$ if and only if $(x,y)$ is in $E$ for each $x$ not equal to $y$. The motivation to study representable graphs came from algebra, but this subject is interesting from graph theoretical, computer science, and combinatorics on words points of view.


In this paper, we prove that for $n$ greater than $3$, the line graph of an $n$-wheel is non-representable. This not only provides a new construction of non-representable graphs, but also answers an open question on representability of the line graph of the $5$-wheel, the minimal non-representable graph. Moreover, we show that for $n$ greater than $4$, the line graph of the complete graph is also non-representable. We then use these facts to prove that given a graph $G$ which is not a cycle, a path or a claw graph, the graph obtained by taking the line graph of $G$ $k$-times is guaranteed to be non-representable for $k$ greater than $3$.

\end{abstract}

\section{Introduction}

A graph $G=(V,E)$ is representable if there exists a word $W$ over
the alphabet $V$ such that letters $x$ and $y$ alternate in $W$ if
and only if $(x,y)\in E$ for each $x\neq y$. Such a $W$ is called a {\em word-representant} of $G$. Note that in this paper we use the term graph to mean a finite, simple graph, even though the definition of representable is applicable to more general graphs. 

It was shown by Kitaev and Pyatkin, in \cite{KP}, that if a graph is representable by $W$, then one can assume that $W$ is {\em uniform}, that is, it contains the same number of copies of each letter. If the number of copies of each letter in $W$ is $k$, we say that $W$ is {\em $k$-uniform}. For example, the graph to the left in Fig.~\ref{fig:petersen} can be represented by the $2$-uniform word $12312434$
(in this word every pair of letters alternate, except $1$ and $4$, and
$2$ and $4$), while the graph to the right, the Petersen graph, can be
represented by the $3$-uniform word $027618596382430172965749083451$
(the Petersen graph cannot be represented by a $2$-uniform word as
shown in \cite{HKP2})

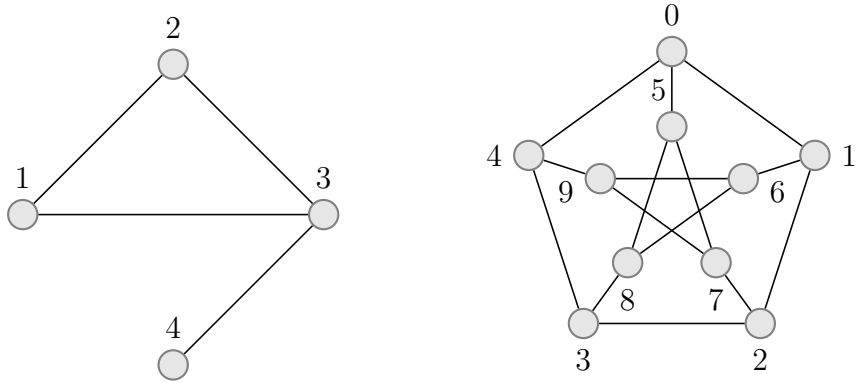
\begin{figure}[h]
\begin{center}
 \begin{tikzpicture}
 [place/.style = {circle,draw = black!50,fill = gray!20,thick}]

 \node at (0,2) [place] (2) {};
 \node [black,above] at (2.north) {$2$};

 \node at (-2,0) [place] (1) {};
 \node [black,above] at (1.north) {$1$};

 \node at (2,0) [place] (3) {};
 \node [black,above] at (3.north) {$3$};

 \node at (0,-2) [place] (4) {};
 \node [black,above] at (4.north) {$4$};

 \draw [-,semithick] (2) to (1);
 \draw [-,semithick] (2) to (3);
 \draw [-,semithick] (1) to (3);
 \draw [-,semithick] (3) to (4);

 \end{tikzpicture} \qquad\qquad
\begin{tikzpicture}[place/.style = {circle,draw = black!50,fill = gray!20,thick}, auto]

 \node [place] (0) at (90:2)      {};
 \node [black,above] at (0.north) {$0$};
 \node [place] (4) at (90+72:2)   {};
 \node [black,left] at (4.west) {$4$};
 \node [place] (3) at (90+2*72:2) {};
 \node [black,below] at (3.south) {$3$};
 \node [place] (2) at (90+3*72:2) {};
 \node [black,below] at (2.south) {$2$};
 \node [place] (1) at (90+4*72:2) {};
 \node [black,right] at (1.east) {$1$};

 \node [place] (5) at (90:1)      {};
 \node [black,above] at (5.north) [xshift = -5] {$5$};
 \node [place] (9) at (90+72:1)   {};
 \node [black,left] at (9.west) [yshift = -5] {$9$};
 \node [place] (8) at (90+2*72:1) {};
 \node [black,below] at (8.south) {$8$};
 \node [place] (7) at (90+3*72:1) {};
 \node [black,below] at (7.south) {$7$};
 \node [place] (6) at (90+4*72:1) {};
 \node [black,right] at (6.east) [yshift = -5] {$6$};

 \draw [-,semithick] (0) to (4);
 \draw [-,semithick] (4) to (3);
 \draw [-,semithick] (3) to (2);
 \draw [-,semithick] (2) to (1);
 \draw [-,semithick] (1) to (0);

 \draw [-,semithick] (5) to (8);
 \draw [-,semithick] (5) to (7);
 \draw [-,semithick] (9) to (6);
 \draw [-,semithick] (9) to (7);
 \draw [-,semithick] (6) to (8);

 \draw [-,semithick] (0) to (5);
 \draw [-,semithick] (4) to (9);
 \draw [-,semithick] (3) to (8);
 \draw [-,semithick] (2) to (7);
 \draw [-,semithick] (1) to (6);

\end{tikzpicture}

 \caption{A graph representable by a $2$-uniform word and the Petersen graph}
 \label{fig:petersen}
\end{center}
\end{figure}

The notion of a representable graph comes from algebra, where it was used by Kitaev and Seif to study the growth of the free spectrum of the well known {\em Perkins semigroup}  \cite{KS}. There are also connections between representable graphs and robotic scheduling as described by Graham and Zang in \cite{GZ}. Moreover, representable graphs are a generalization of {\em circle graphs}, which was shown by  Halld\'orsson, Kitaev and Pyatkin in \cite{HKP1}, and thus they are interesting from a graph theoretical point of view. Finally, representable graphs are interesting from a combinatorics on words point of view as they deal with the study of alternations in words.

Not all graphs are representable. Examples of minimal (with respect to the number of nodes) representable graphs given by Kitaev and Pyatkin in \cite{KP} are presented in Fig.~\ref{fig:minimalnonrep}.

\begin{figure}[h]
\begin{center}
\begin{tikzpicture}[place/.style = {circle,draw = black!50,fill = gray!20,thick}, auto]

 \node [place] (0) at (90:1)      {};
 \node [place] (4) at (90+72:1)   {};
 \node [place] (3) at (90+2*72:1) {};
 \node [place] (2) at (90+3*72:1) {};
 \node [place] (1) at (90+4*72:1) {};

 \node [place] (5) at (0,0)      {};

 \draw [-,semithick] (0) to (4);
 \draw [-,semithick] (4) to (3);
 \draw [-,semithick] (3) to (2);
 \draw [-,semithick] (2) to (1);
 \draw [-,semithick] (1) to (0);

 \draw [-,semithick] (0) to (5);
 \draw [-,semithick] (1) to (5);
 \draw [-,semithick] (2) to (5);
 \draw [-,semithick] (3) to (5);
 \draw [-,semithick] (4) to (5);

\end{tikzpicture}\quad
\begin{tikzpicture}[place/.style = {circle,draw = black!50,fill = gray!20,thick}, auto]

 \node [place] (1) at (90:2)      {};
 \node [place] (2) at (90+120:2)   {};
 \node [place] (3) at (90+2*120:2) {};

 \node [place] (4) at (90-17:1)      {};
 \node [place] (5) at (90+120-17:1)   {};
 \node [place] (6) at (90+2*120-17:1) {};

 \node [place] (0) at (0,0)      {};

 \draw [-,semithick] (1) to (2);
 \draw [-,semithick] (2) to (3);
 \draw [-,semithick] (3) to (1);

 \draw [-,semithick] (4) to (5);
 \draw [-,semithick] (5) to (6);
 \draw [-,semithick] (6) to (4);

 \draw [-,semithick] (1) to (4);
 \draw [-,semithick] (2) to (5);
 \draw [-,semithick] (3) to (6);

 \draw [-,semithick] (0) to (1);
 \draw [-,semithick] (0) to (2);
 \draw [-,semithick] (0) to (3);
 \draw [-,semithick] (0) to (4);
 \draw [-,semithick] (0) to (5);
 \draw [-,semithick] (0) to (6);

\end{tikzpicture}\quad
\begin{tikzpicture}[place/.style = {circle,draw = black!50,fill = gray!20,thick}, auto]

 \node [place] (1) at (90:2)      {};
 \node [place] (2) at (90+120:2)   {};
 \node [place] (3) at (90+2*120:2) {};

 \node [place] (4) at (90-17:1)      {};
 \node [place] (5) at (90+120-17:1)   {};
 \node [place] (6) at (90+2*120-17:1) {};

 \node [place] (0) at (0,0)      {};

 \draw [-,semithick] (1) to (2);
 \draw [-,semithick] (2) to (3);
 \draw [-,semithick] (3) to (1);

 \draw [-,semithick] (6) to (4);

 \draw [-,semithick] (1) to (4);
 \draw [-,semithick] (2) to (5);
 \draw [-,semithick] (3) to (6);

 \draw [-,semithick] (0) to (1);
 \draw [-,semithick] (0) to (2);
 \draw [-,semithick] (0) to (3);
 \draw [-,semithick] (0) to (4);
 \draw [-,semithick] (0) to (5);
 \draw [-,semithick] (0) to (6);

\end{tikzpicture}\quad
\begin{tikzpicture}[place/.style = {circle,draw = black!50,fill = gray!20,thick}, auto]

 \node [place] (1) at (90:2)      {};
 \node [place] (2) at (90+120:2)   {};
 \node [place] (3) at (90+2*120:2) {};

 \node [place] (4) at (90-17:1)      {};
 \node [place] (5) at (90+120-17:1)   {};
 \node [place] (6) at (90+2*120-17:1) {};

 \node [place] (0) at (0,0)      {};

 \draw [-,semithick] (1) to (2);
 \draw [-,semithick] (2) to (3);
 \draw [-,semithick] (3) to (1);


 \draw [-,semithick] (1) to (4);
 \draw [-,semithick] (2) to (5);
 \draw [-,semithick] (3) to (6);

 \draw [-,semithick] (0) to (1);
 \draw [-,semithick] (0) to (2);
 \draw [-,semithick] (0) to (3);
 \draw [-,semithick] (0) to (4);
 \draw [-,semithick] (0) to (5);
 \draw [-,semithick] (0) to (6);

\end{tikzpicture}
 \caption{Minimal non-representable graphs}
 \label{fig:minimalnonrep}
\end{center}
\end{figure}
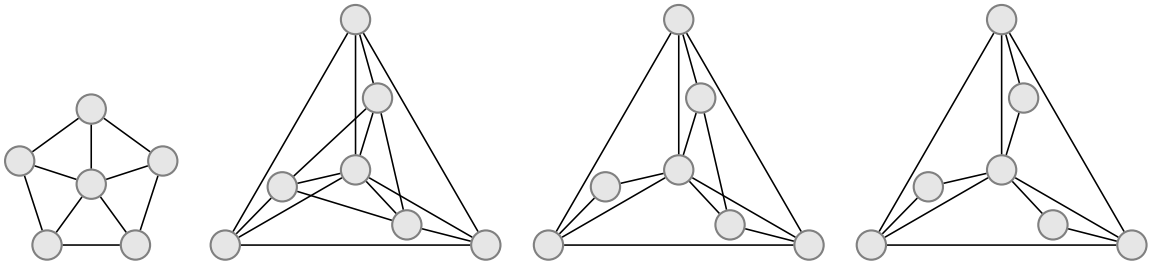

It was remarked in \cite{HKP1} that very little is known about the effect of the line graph operation on the representability of a graph. We attempt to shed some light on this subject by showing that the line graph of the smallest known non-representable graph, the wheel on five vertices, $W_5$, is in fact non-representable. In fact we prove a stronger result, which is that $L(W_n)$ (where $L(G)$ denotes the line graph of $G$) is non-representable for $n \geqslant 4$. From the non-representability of $L(W_4)$ we are led to a more general theorem regarding line graphs. Our main result is that $L^k(G)$, where $G$ is not a cycle, a path or the claw graph, is guaranteed to be non-representable for $k \geqslant 4$.



Although almost all graphs are non-representable (as discussed in \cite{KP}) and even though a criteria in terms of {\em semi-transitive orientations} is given in \cite{HKP1} for a graph to be representable, essentially only two explicit constructions of non-representable graphs are known. Apart from the so-called $co$-$(T_2)$ graph whose non-representability is proved in  \cite{HKP2} in connection with solving an open problem in \cite{KP}, the known constructions of non-representable graphs can be described as follows (note that the property of being representable is hereditary, i.e., it is inherited by all induced subgraphs, thus adding additional nodes to a non-representable graph and connecting them in an arbitrary way to the original nodes will also result in a non-representable graph).

\begin{itemize}
\item Adding an all-adjacent node to a {\em non-comparability graph} results in a non-representable graph (all of the graphs in Fig.~\ref{fig:minimalnonrep} are obtained in this way). This construction is discussed in \cite{KP}.
\item Let $H$ be a 4-chromatic graph with {\em girth} (the length of the shortest cycle) at least 10 (such graphs exist by a theorem of Erd\H{o}s).
For every path of length 3 in $H$ add a new edge connecting the ends of the path. The resulting graph will be non-representable as shown in \cite{HKP1}. This construction gives an example of triangle-free non-representable graphs whose existence was asked for in \cite{KP}.
\end{itemize}

Our results showing that $L(W_n)$, $n \geqslant 4$, and $L(K_{n})$, $n \geqslant 5$, are non-representable give two new constructions of non-representable graphs.


Our main result about repeatedly taking the line graph, shown in Sect.~\ref{iteratingLineGraph}, also gives a new method for constructing non-representable graphs when starting with an arbitrary graph (excluding cycles, paths and the claw graph of course). Since we can start with an arbitrary graph this should also allow one to construct non-representable graphs with desired properties by careful selection of the original graph.


Although we have answered some questions about the line graph operation, there are still open questions related to the representability of the line graph, and in Sect.~\ref{finalRemarks} we list some of these problems.  

\section{Preliminaries on Words and Basic Observations}

\subsection{Introduction to Words}

We denote the set of finite words on an alphabet $\Sigma$ by $\Sigma^*$ and the empty word by $\varepsilon$.

A \emph{morphism} $\varphi$ is a mapping $\Sigma^* \rightarrow \Sigma^*$ that satisfies the property $\varphi(uv)=\varphi(u)\varphi(v)$ for all words $u$, $v$. Clearly, the morphism is completely defined by its action on the letters of the alphabet. The \emph{erasing} of a set $\Sigma \backslash S$ of symbols is a morphism $\epsilon_S : \Sigma^* \rightarrow \Sigma^*$ such that $\epsilon_S(a)=a$ if $a\in S$ and $\epsilon_S(a)=\varepsilon$ otherwise.

A word $u$ \emph{occurs} in a word $v=v_0v_1\ldots v_n$ at the position $m$ and is called a \emph{subword} of $v$ if $u=v_mv_{m+1}\ldots v_{m+k}$ for some $m$, $k$. A subword that occurs at position $0$ in some word is called a \emph{prefix} of that word. A word is \emph{$m$-uniform} if each symbol occurs in it exactly $m$ times. We say that a word is uniform if it is $m$-uniform for some $m$.

Symbols $a,b$ \emph{alternate} in a word $u$ if both of them occur in $u$ and after erasing all other letters in $u$ we get a subword of $abab\ldots$. 

The \emph{alternating graph} $G$ of a word $u$ is a graph on the symbols occurring in $u$ such that $G$ has an edge $(a,b)$ if and only if $a$,$b$ alternate in $u$. A graph $G$ is \emph{representable} if it is the alternating graph of some word $u$. We call $u$ a \emph{representant} of $G$ in this case.

A key property of representable graphs was shown by Kitaev and Pyatkin in \cite{KP}:
\begin{theorem}\label{tRepByUni} Each representable graph has a uniform representant.
\end{theorem}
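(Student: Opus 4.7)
My approach is iterative: starting from any representant $W$ of $G$, I repeatedly append a single letter at a time, each time preserving the representant property, until uniformity is achieved.

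The fundamental lemma I would prove first is that if $W$ represents $G$ and $x \in V(G)$ has last occurrence in $W$ strictly earlier than the last occurrence of every neighbor of $x$, then $Wx$ also represents $G$. This is verified pair by pair. For a neighbor $y$ of $x$, the restriction $W \mid_{\{x,y\}}$ is alternating and, by the hypothesis, ends in $y$, so appending an $x$ still yields an alternating word. For a non-neighbor $z$ of $x$, the restriction $W \mid_{\{x,z\}}$ is non-alternating and already contains a consecutive repeated pair $xx$ or $zz$, which is preserved under any appending. Call such an $x$ \emph{appendable} in $W$.

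I would then observe that the letter with globally earliest last occurrence in $W$ always satisfies the hypothesis, so appending is always possible. In particular, by iteratively appending the current earliest-last-occurrence letter, one can realize the permutation $\sigma = x_1 x_2 \cdots x_n$ listing $V(G)$ by increasing last occurrence in $W$, producing a representant $W\sigma$ in which every count has grown by exactly one. Partial prefixes of $\sigma$ are also allowed, and continuation past $x_n$ simply follows a cyclic rotation of the last-occurrence order. This gives a reasonable amount of flexibility in which letters to append when.

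The main obstacle is converting this flexibility into a \emph{count-equalizing} scheme rather than one that merely raises all counts uniformly. I would argue by induction on the deficit $\max_v c_v - \min_v c_v$: given a minimum-count vertex $x_0$, first append other letters (in a permitted order) so that their last occurrences are pushed beyond $x_0$'s, rendering $x_0$ appendable, and then append $x_0$. The delicate point, and the principal combinatorial obstacle, is to prove that such a rearrangement is always achievable without creating a worse deficit somewhere else; this requires showing that the appendability conditions on the intermediate words never deadlock the process for a deficient letter. Once one establishes that the deficit can be strictly decreased by finitely many single-letter appendings, iterating yields a uniform representant of $G$.
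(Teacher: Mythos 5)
Note first that the paper does not prove this statement at all: Theorem~\ref{tRepByUni} is quoted from Kitaev and Pyatkin \cite{KP}, so there is no in-paper proof to match your argument against; your proposal has to stand on its own.

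Your appendability lemma is correct (for a neighbour $y$ the restricted word is alternating and, by hypothesis, ends in $y$, so appending $x$ keeps it alternating; for a non-neighbour the witnessing square $xx$ or $zz$ survives any appending), and so is the observation that the letter of globally earliest last occurrence is always appendable. But the proof is not complete: the count-equalizing step, which you yourself flag as ``the principal combinatorial obstacle,'' is exactly the content of the theorem, and the strategy you sketch for it is the wrong move. To make a minimum-count letter $x_0$ appendable you propose appending its blocking neighbours first so as to push their last occurrences past that of $x_0$; but every such append increases the multiplicity of a letter that is already at least as frequent as $x_0$, so the deficit $\max_v c_v - \min_v c_v$ can grow, and nothing in your argument rules out this chasing of neighbours going on forever. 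The missing observation that closes the gap cleanly is this: if $y$ is a neighbour of $x$ and $y$ occurs \emph{strictly more} often than $x$, then because $x$ and $y$ alternate their multiplicities differ by exactly one and the restricted word must begin and end with $y$; hence the last occurrence of $x$ automatically precedes that of $y$, and $y$ never blocks $x$. Consequently the only possible blockers of a minimum-multiplicity letter are \emph{other} minimum-multiplicity letters, and one can append the entire set $X$ of minimum-multiplicity letters, in increasing order of last occurrence, each one being appendable at its turn (its $X$-neighbours with earlier last occurrence have already been moved to the end, and all other neighbours are harmless by the observation above). This raises only the minimum count by one and strictly decreases the deficit; iterating gives a uniform representant. (This is essentially the argument of \cite{KP}, stated there with prepending and first occurrences instead of appending and last occurrences.) Without this, or some equivalent non-deadlock argument, your proof has a genuine hole at its central step.
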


Assuming uniformity makes dealing with the representant of a graph a much nicer task and plays a crucial role in some of our proofs.

\subsection{Basic Observations}
A \emph{cyclic shift} of a word $u=u_0u_1\ldots u_n$ is the word $Cu=u_1u_2\ldots u_nu_0$.
\begin{proposition}\label{pCShift} Uniform words $u=u_0u_1\ldots u_n$ and $Cu$ have the same alternating graph.
\end{proposition}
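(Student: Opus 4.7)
The plan is to reduce the proposition to a statement about binary alphabets by restricting attention, one pair at a time, to the two letters whose alternation we wish to test. Fix distinct symbols $a, b$ that both occur in $u$, and set $w = \epsilon_{\{a,b\}}(u)$. Since $u$ is $k$-uniform for some $k$, the word $w$ has length exactly $2k$, containing $k$ copies of $a$ and $k$ copies of $b$.

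Next I would show that the operations ``cyclically shift'' and ``erase all letters outside $\{a,b\}$'' commute in the appropriate sense. There are two cases. If $u_0 \notin \{a,b\}$, then $u_0$ is erased from both $u$ and $Cu$, so $\epsilon_{\{a,b\}}(Cu) = \epsilon_{\{a,b\}}(u) = w$. If instead $u_0 \in \{a,b\}$, then $u_0$ is the first letter of $w$, and in $Cu$ that occurrence of $u_0$ has been moved from the front to the very end, with all intervening $\{a,b\}$-letters preserved in order; hence $\epsilon_{\{a,b\}}(Cu) = Cw$. Either way, the restriction of $Cu$ to $\{a,b\}$ is a cyclic shift of $w$.

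The final step is to verify that cyclic shift preserves the alternating property on binary words of the forced Parikh vector. Among words of length $2k$ on $\{a,b\}$ with $k$ copies of each letter, the only ones in which $a$ and $b$ alternate (i.e., which are subwords of $abab\cdots$ or $baba\cdots$) are $(ab)^k$ and $(ba)^k$. A cyclic shift by one maps $(ab)^k$ to $(ba)^k$ and vice versa. Since cyclic shift is a bijection on words with a given letter count, it must then also permute the complementary set of non-alternating words among themselves. So $a$ and $b$ alternate in $w$ if and only if they alternate in $Cw$. Combined with the previous step, this shows that $a, b$ alternate in $u$ iff they alternate in $Cu$, and since the pair was arbitrary the two alternating graphs coincide.

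I do not expect a real obstacle; the whole argument is bookkeeping. The one place worth a careful check is the second case of the restriction/shift interaction, and the essential use of uniformity is that it forces the restriction $w$ to have exactly $k$ copies of each letter, which is what pins down the alternating words to just $(ab)^k$ and $(ba)^k$ and makes the parity argument work.
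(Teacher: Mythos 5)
Your proof is correct and follows essentially the same route as the paper's: reduce to a single pair of letters, observe that erasing the other letters commutes with the cyclic shift, and use uniformity to pin down the alternating restrictions as $(ab)^k$ or $(ba)^k$. The only cosmetic difference is in the last step, where the paper verifies the non-alternating case directly by tracking a repeated-letter factor $u_iu_i$, while you obtain it for free from the observation that the shift is a bijection on the finite set of balanced binary words that preserves the two-element subset of alternating ones.
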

\begin{proof} Alternating relations of letters not equal to $u_0$ are not affected by the cyclic shift. Thus we need only prove that $u_0$ has the same alternating relations with other symbols in $Cu$ as it had in $u$.

Suppose $u_0$,$u_i$ alternate in $u$. Due to $u$ being uniform, it must be that $\epsilon_{\{u_0,u_i\}}(u)=(u_0u_i)^m$, where $m$ is the uniform number of $u$. In this case, $\epsilon_{\{u_0,u_i\}}(Cu)=u_i(u_0u_i)^{m-1}u_0$ and hence the symbols $u_0$, $u_i$ alternate in $Cu$.

Suppose $u_0$,$u_i$ do not alternate $u$. Since $u$ is uniform, $u_iu_i$ is a subword of $\epsilon_{\{u_0,u_i\}}(u)$. Also, we know that $u_iu_i$ cannot be the prefix of $u$, so it must occur in $\epsilon_{\{u_0,u_i\}}(Cu)$ too. Hence, $u_0$,$u_i$ do not alternate in $Cu$.
\end{proof}

Taking into account this fact, we may consider representants as cyclic or infinite words in order not to treat differently the end of the word while considering a local part of it.


Let us denote a clique on $n$ vertices by $K_n$. One can easily prove the following proposition.

\begin{proposition}\label{pCliqueRep} An $m$-uniform word that is a representant of $K_{n}$ is a word of the form $v^m$ where $v$ is $1$-uniform word containing $n$ letters.
\end{proposition}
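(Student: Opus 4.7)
The plan is to pin down the structure of an $m$-uniform representant $W$ of $K_n$ by using that every pair of letters must alternate. Since every pair alternates and $W$ is $m$-uniform, for any two letters $a,b$ the restriction $\epsilon_{\{a,b\}}(W)$ is one of the two $m$-fold alternating words $(ab)^m$ or $(ba)^m$; which one is determined by the order of first occurrence in $W$. Label the $n$ letters $a_1,a_2,\ldots,a_n$ in the order they first appear in $W$, so for every pair $i<j$ we have $\epsilon_{\{a_i,a_j\}}(W)=(a_i a_j)^m$.

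Let $p_k^i$ denote the position of the $k$-th occurrence of $a_i$ in $W$. The first step would be to extract from the alternation condition the two basic inequalities: $p_k^i<p_k^j$ whenever $i<j$, and $p_k^i<p_{k+1}^j$ for all $i\neq j$ and $k<m$. The first follows directly from $\epsilon_{\{a_i,a_j\}}(W)=(a_i a_j)^m$; for the second I would split on whether $j<i$ or $i<j$ and read off the inequality from $(a_j a_i)^m$ or $(a_i a_j)^m$ respectively, in each case using that between consecutive occurrences of one letter there sits exactly one occurrence of the other.

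Next I would organize these inequalities globally. Let $P_k=\{p_k^1,\ldots,p_k^n\}$ denote the set of positions of the $k$-th occurrences of all $n$ letters. The two inequalities together say that every element of $P_k$ is strictly less than every element of $P_{k+1}$, and within $P_k$ the positions occur in the order $a_1,a_2,\ldots,a_n$. Since the $nm$ positions of $W$ are partitioned as $P_1\sqcup P_2\sqcup\cdots\sqcup P_m$ with $|P_k|=n$, a counting argument forces $P_k$ to consist of the $n$ consecutive positions $\{(k-1)n+1,\ldots,kn\}$.

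Finally, reading off $W$ block by block, the $k$-th block is the word $a_1a_2\cdots a_n$, so setting $v=a_1a_2\cdots a_n$ gives $W=v^m$, and $v$ is $1$-uniform of length $n$, as required. The only real work is the position-inequality step above; the rest is bookkeeping and a pigeonhole count. I do not expect any serious obstacle, since once the pairwise alternation pattern is fixed by first-occurrence order, the global structure is essentially forced.
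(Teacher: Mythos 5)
Your proof is correct and complete; the paper itself leaves this proposition as an easy exercise (``One can easily prove the following proposition''), so there is no official proof to diverge from. Your argument --- ordering the letters by first occurrence, extracting the position inequalities from the forced pattern $(a_ia_j)^m$, and then using the pigeonhole/counting step to identify the blocks $P_k$ with consecutive runs of $n$ positions --- is exactly the natural way to fill in this gap, and every step checks out.
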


Let us consider another simple graph $C_{n}$ that is a cycle on $n$ vertices.

\begin{lemma}\label{lCycleRep} The word $012\ldots n$ is not a subword of any uniform representant of $C_{n+1}$ with vertices labeled in consecutive order, where $n \geqslant 3$.
\end{lemma}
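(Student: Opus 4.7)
The plan is to argue by contradiction: assume some $m$-uniform representant $W$ of $C_{n+1}$ contains $012\ldots n$ as a subword, and show that $W$ must then equal the cyclic word $(012\ldots n)^m$, whose alternating graph is $K_{n+1}$; since $C_{n+1}\neq K_{n+1}$ for $n\geqslant 3$, this yields the desired contradiction.

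By Proposition~\ref{pCShift} I may treat $W$ as cyclic, so the subword occupies consecutive positions $p_0,p_0+1,\ldots,p_0+n$ with well-defined neighbours on each side. First I pin down the letter $x$ at position $p_0-1$: for each $i$ with $0\leqslant i\leqslant n-1$, if $x=i$ then the window $x,0,1,\ldots,i,i+1,\ldots,n$ contains two $i$'s separated only by letters outside $\{i,i+1\}$, so $\epsilon_{\{i,i+1\}}(W)$ contains $ii$, contradicting that $\{i,i+1\}$ is an edge of $C_{n+1}$. This leaves only $x=n$. A symmetric case analysis of the letter $y$ at position $p_0+n+1$ — every value $y=i$ with $1\leqslant i\leqslant n$ creates the forbidden pattern $ii$ in $\epsilon_{\{i-1,i\}}(W)$ — forces $y=0$.

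Now I iterate. Positions $p_0,\ldots,p_0+n+1$ of $W$ read $0,1,\ldots,n,0$, so the block at positions $p_0+1,\ldots,p_0+n+1$ reads $1,2,\ldots,n,0$. Apply the label rotation $\phi:i\mapsto i-1\pmod{n+1}$; since $\phi$ is an automorphism of $C_{n+1}$, the word $W'=\phi(W)$ is again a uniform representant of $C_{n+1}$, and in $W'$ that block reads $0,1,\ldots,n-1,n$ — exactly the situation to which the forcing argument above applies. Hence the letter at position $p_0+n+2$ of $W'$ is $0$, so in $W$ it is $\phi^{-1}(0)=1$. Iterating the relabeling trick, the letter at position $p_0+j$ of $W$ is forced to be $j\bmod(n+1)$ for every $j$, so cyclically $W=(012\ldots n)^m$. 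For any distinct letters $i,j$ the restriction $\epsilon_{\{i,j\}}((012\ldots n)^m)=(ij)^m$ alternates, so the alternating graph of $W$ is $K_{n+1}$, the desired contradiction.

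The main obstacle I anticipate is stating the iteration cleanly: a direct position-by-position case analysis would work but be repetitive, and the cleanest route is to invoke the cyclic label-rotation symmetry of $C_{n+1}$, which at every step reduces the argument to a single reapplication of the initial forcing step.
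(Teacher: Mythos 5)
Your proof is correct, but it takes a genuinely different route from the paper's. The paper fixes the occurrence of $012\ldots n$ as a prefix (via Proposition~\ref{pCShift}), introduces the positions $a_i$ of the $i$-th occurrence of each letter $a$, observes that each cycle edge $a<b$ forces $a_i<b_i<a_{i+1}$, and then derives a contradiction directly from the single non-edge $(0,2)$: whichever way the alternation of $0$ and $2$ fails, a short chain of these inequalities closes into a cycle $0_k<1_k<2_k<0_k$ or $2_k<3_k<\cdots<n_k<0_{k+1}<2_k$. Your argument instead proves a rigidity statement: the neighbour-forcing step (no $i+1$ between two nearby $i$'s would kill the edge $\{i,i+1\}$) pins down the letters on either side of the block, and the rotation automorphism $\phi$ of $C_{n+1}$ lets you propagate this around the whole cyclic word, concluding $W=(012\ldots n)^m$ and hence that the alternating graph is $K_{n+1}$, contradicting $C_{n+1}\neq K_{n+1}$ for $n\geqslant 3$. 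Your version is somewhat longer and must be a little careful about cyclic positions and the induction bookkeeping, but it yields a stronger structural conclusion (the word is completely determined by one occurrence of $012\ldots n$); the paper's version is shorter and exploits only one non-adjacency. Both rest on the same two facts from the preliminaries (uniformity plus invariance under cyclic shifts), so either could replace the other in the application to Theorem~\ref{wheelNonRep}.
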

\begin{proof} Suppose, $u$ is a uniform representant of $C_{n+1}$ and $v=012\ldots n$ is a subword. Due to Proposition~\ref{pCShift} we may assume that $v$ is a prefix of $u$. Define $a_i$ to be the position of the $i$-th instance of $a$ in $u$ for $a \in \{0,1,\ldots,n\}$. Now for all adjacent vertices $a<b$ we have $a_i<b_i<a_{i+1}$ for each $i \geqslant 1$.

Vertices $0$, $2$ are not adjacent in $C_{n+1}$ and so do not alternate in $u$. It follows that there is a $k \geqslant 1$ such that $0_k>2_k$ or $2_{k}>0_{k+1}$.

Suppose $2_k<0_k$. Since $1$,$2$ and $0$,$1$ are adjacent, we have $1_i<2_i$ and $0_i<1_i$ for each $i$. Then we have a contradiction $0_k<1_k<2_k<0_k$.

Suppose $0_{k+1}<2_k$. Since all pairs $j,j+1$ and the pair $n,0$ are adjacent, we have inequalities $j_i<(j+1)_i$ for each $j<n$, $i \geqslant 0$, and $n_i<0_{i+1}$ for each $i \geqslant 0$. Thus we get a contradiction $2_k<3_k<\ldots <n_k<0_{k+1}<2_k$.
\end{proof}

Here we introduce some notation. Let $u$ be a representant of some graph $G$ that contains a set of vertices $S=S_0 \cup S_1 \cup \{a\}$ such that $a\not \in S_0\cup S_1$ and $S_0\cap S_1 = \emptyset$. We use the notation $\forall (a-S_0<S_1-a)$ for the statement ``Between each two neighbor occurrences of $a$ in each $\epsilon_S(C^nu)$, each symbol of $S_0\cup S_1$ occurs once and each symbol of $S_0$ occurs before any symbol of $S_1$'' and the notation $\exists (a-S_0<S_1-a)$ for the statement ``There are two neighbor occurrences of $a$ in some $\epsilon_S(C^nu)$, such that each symbol of $S_0\cup S_1$ occurs between them and each symbol of $S_0$ occurs before any symbol of $S_1$''. Note, that $\forall (a-S_0<S_1-a)$ implies $\exists (a-S_0<S_1-a)$ and is contrary for $\exists (a-S_1<S_0-a)$. The quantifier in this statements operates on pairs of neighbor occurrences of $a$ in all cyclic shifts of the given representant.

\begin{proposition}\label{pAdAndNotAdVertices} Let a word $u$ be a representant of some graph $G$ containing vertices $a,b,c$, where $a,b$ and $a,c$ are adjacent. Then we have
\begin{enumerate}
  \item $b,c$ being not adjacent implies that both of the statements $\exists (a-b<c-a)$, $\exists (a-c<b-a)$ are true for $u$,
  \item $b,c$ being adjacent implies that exactly one of the statements $\forall (a-b<c-a)$, $\forall (a-c<b-a)$ is true for $u$.
\end{enumerate}
\end{proposition}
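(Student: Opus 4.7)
My plan is to reduce to a uniform representant $u$ of $G$ via Theorem~\ref{tRepByUni} and then exploit Proposition~\ref{pCShift} to work freely with cyclic shifts of $u$. The key structural observation I would establish first is that in the erased word $\epsilon_{\{a,b,c\}}(u)$, the cyclically consecutive occurrences of $a$ partition the remaining letters into $m$ ``slots'', each consisting of exactly one $b$ and exactly one $c$. This follows because $a$ alternates with $b$, so $\epsilon_{\{a,b\}}(u)\in\{(ab)^m,(ba)^m\}$ places exactly one $b$ between neighboring $a$'s, and similarly $\epsilon_{\{a,c\}}(u)\in\{(ac)^m,(ca)^m\}$ places exactly one $c$. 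Each slot is therefore one of the two ordered pairs $bc$ or $cb$, and the whole proposition becomes a question about the distribution of the two slot types around the cyclic word.

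From here both parts follow by reading $\epsilon_{\{b,c\}}(u)$ as the cyclic concatenation of the slot contents. For part~(1), non-adjacency of $b$ and $c$ means $b,c$ do not alternate, so this concatenation is neither $(bc)^m$ nor $(cb)^m$; hence at least one slot is $bc$ and at least one slot is $cb$, witnessing both $\exists(a-b<c-a)$ and $\exists(a-c<b-a)$. For part~(2), adjacency of $b$ and $c$ forces $\epsilon_{\{b,c\}}(u)\in\{(bc)^m,(cb)^m\}$, which in turn forces every slot to be of the same type. This gives exactly one of $\forall(a-b<c-a)$ (if every slot is $bc$) or $\forall(a-c<b-a)$ (if every slot is $cb$), with mutual exclusivity immediate since $m\geq 1$.

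I do not expect any substantive obstacle. Once uniformity and cyclic invariance have been invoked, everything reduces to examining the order of a single $b$ and a single $c$ inside each slot and asking whether those orders agree across all $m$ slots. The one spot that requires a pinch of care is verifying that the paper's quantifier convention, which ranges over pairs of neighboring $a$'s across all cyclic shifts of $u$, is captured precisely by the cyclic slot decomposition above; but that verification is a direct unwinding of the definitions introduced just before the proposition.
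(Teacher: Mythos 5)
Your proof is correct and follows essentially the same route as the paper: both arguments rest on the observation that alternation of $a$ with $b$ and with $c$ places exactly one $b$ and one $c$ between cyclically neighboring $a$'s, after which part~(1) is the contrapositive ``only one slot type occurring would force $b,c$ to alternate'' and part~(2) is the consistency of slot types forced by alternation of $b,c$ (which the paper obtains by citing Proposition~\ref{pCliqueRep} for the triangle $\{a,b,c\}$ rather than rederiving it). The only cosmetic difference is that the paper leaves the slot decomposition and the uniformity convention implicit, whereas you spell them out.
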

\begin{proof} (Case 1) Since $a,b$ and $a,c$ alternate, at least one of $\exists (a-b<c-a)$, $\exists (a-c<b-a)$ is true. If only one of them is true for $u$, then $b,c$ alternate in it, which is a contradiction with $b,c$ being not adjacent.

(Case 2) the statement follows immediately from Proposition~\ref{pCliqueRep}.
\end{proof}



\section{Line Graphs of Wheels}\label{nonReprWheels}

The \emph{wheel graph}, denoted by $W_n$, is a graph we obtain from a cycle $C_n$ by adding one external vertex adjacent to every other vertex.

A line graph $L(G)$ of a graph $G$ is a graph on the set of edges of $G$ such that in $L(G)$  there is an edge $(a,b)$ if and only if edges $a,b$ are adjacent in $G$.

\begin{figure}[h]
\begin{center}
\begin{tikzpicture}[place/.style = {circle,draw = black!50,fill = gray!20,thick}, auto]

 \node [place] (0) at (90:2)      {};
 \node [place] (4) at (90+72:2)   {};
 \node [place] (3) at (90+2*72:2) {};
 \node [place] (2) at (90+3*72:2) {};
 \node [place] (1) at (90+4*72:2) {};

 \node [place] (w) at (0,0)      {};

 \draw [-,semithick] (0) to node [auto,swap] {$e_0$} (4);
 \draw [-,semithick] (4) to node [auto,swap] {$e_1$} (3);
 \draw [-,semithick] (3) to node [auto,swap] {$e_2$} (2);
 \draw [-,semithick] (2) to node [auto,swap] {$e_3$} (1);
 \draw [-,semithick] (1) to node [auto,swap] {$e_4$} (0);

 \draw [-,semithick] (0) to node [auto,swap] {$i_4$} (w);
 \draw [-,semithick] (4) to node [auto,swap] {$i_0$} (w);
 \draw [-,semithick] (3) to node [auto,swap] {$i_1$} (w);
 \draw [-,semithick] (2) to node [auto,swap] {$i_2$} (w);
 \draw [-,semithick] (1) to node [auto,swap] {$i_3$} (w);

\end{tikzpicture}\quad\quad\quad
\begin{tikzpicture}[place/.style = {circle,draw = black!50,fill = gray!20,thick}, auto]

 \node [place] (e0) at (0+90:3)    {};
 \node [black,above] at (e0.north) {$e_0$};
 \node [place] (e1) at (72+90:3)   {};
 \node [black,left] at (e1.west) {$e_1$};
 \node [place] (e2) at (144+90:3)  {};
 \node [black,below] at (e2.south) {$e_2$};
 \node [place] (e3) at (216+90:3)  {};
 \node [black,below] at (e3.south) {$e_3$};
 \node [place] (e4) at (288+90:3)  {};
 \node [black,right] at (e4.east) {$e_4$};

 \node [place] (0) at (0+54:1)   {};
 \node [black,above] at (0.north) [xshift = 5] {$i_4$};
 \node [place] (1) at (72+54:1)  {};
 \node [black,above] at (1.north) [xshift = -5] {$i_0$};
 \node [place] (2) at (144+54:1) {};
 \node [black,left] at (2.west) [yshift = -5] {$i_1$};
 \node [place] (3) at (216+54:1) {};
 \node [black,below] at (3.south) {$i_2$};
 \node [place] (4) at (288+54:1) {};
 \node [black,right] at (4.east) [yshift = -5] {$i_3$};

 \draw [-,semithick] (e0) to (e1);
 \draw [-,semithick] (e1) to (e2);
 \draw [-,semithick] (e2) to (e3);
 \draw [-,semithick] (e3) to (e4);
 \draw [-,semithick] (e4) to (e0);

 \draw [-,semithick] (0) to (1);
 \draw [-,semithick] (0) to (2);
 \draw [-,semithick] (0) to (3);
 \draw [-,semithick] (0) to (4);
 \draw [-,semithick] (1) to (2);
 \draw [-,semithick] (1) to (3);
 \draw [-,semithick] (1) to (4);
 \draw [-,semithick] (2) to (3);
 \draw [-,semithick] (2) to (4);
 \draw [-,semithick] (3) to (4);

 \draw [-,semithick] (e0) to (0);
 \draw [-,semithick] (e0) to (1);
 \draw [-,semithick] (e1) to (1);
 \draw [-,semithick] (e1) to (2);
 \draw [-,semithick] (e2) to (2);
 \draw [-,semithick] (e2) to (3);
 \draw [-,semithick] (e3) to (3);
 \draw [-,semithick] (e3) to (4);
 \draw [-,semithick] (e4) to (4);
 \draw [-,semithick] (e4) to (0);

\end{tikzpicture}
 \caption{The wheel graph $W_5$ and its line graph}
 \label{fig:wheel}
\end{center}
\end{figure}
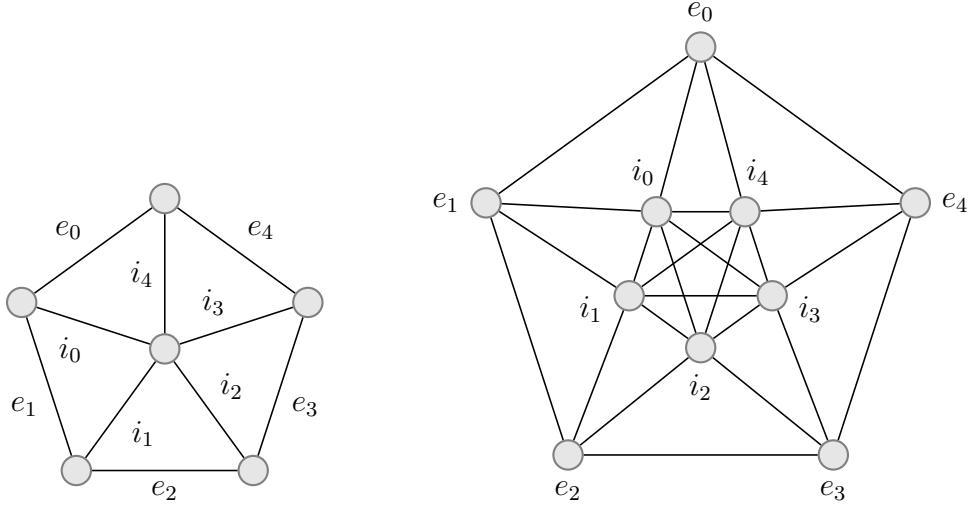

\begin{theorem}\label{wheelNonRep} The line graph $L(W_{n+1})$ is not representable for each $n \geqslant 3$.
\end{theorem}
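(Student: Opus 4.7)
I proceed by contradiction. Suppose $L(W_{n+1})$ has a uniform representant $u$ (Theorem~\ref{tRepByUni}), and let $i_j^{(t)},e_j^{(t)}$ denote the $t$-th cyclic occurrences. By Proposition~\ref{pCliqueRep} applied to the spoke-clique $\{i_0,\dots,i_n\}$, the restriction of $u$ to the spokes has the form $v^m$ for some $1$-uniform word $v$. For each $j$ the set $\{e_j,i_j,i_{j+1}\}$ induces a triangle, so Proposition~\ref{pAdAndNotAdVertices}(2) confines $e_j$ to exactly one of the two cyclic arcs from $i_j$ to $i_{j+1}$ in $v$ in every period. If $i_j,i_{j+1}$ were not adjacent in $v$, each arc would contain an intermediate spoke $i_{k'}$ with $k' \neq j,j+1$; but then $e_j$ (always confined to one arc) would sit at a fixed relative position inside every $i_{k'}$-period on the opposite arc, forcing $e_j$ to alternate with the non-neighbor $i_{k'}$, a contradiction. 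Hence $i_j,i_{j+1}$ are adjacent in $v$ for every $j$, forcing $v=i_0 i_1\cdots i_n$ (reversal being symmetric). The same alternation-with-all argument rules out $e_j$ lying in the short arc between $i_j^{(t)}$ and $i_{j+1}^{(t)}$ (which has no intermediate spoke), so $e_j^{(t)}\in(i_{j+1}^{(t)},i_j^{(t+1)})$ for every $t$.

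Next I would use non-adjacency of $e_j$ with $i_{j+2}$: the $t$-th $i_{j+2}$-period receives $e_j^{(t)}$ whenever $e_j^{(t)}$ lies past $i_{j+2}^{(t)}$, and receives $e_j^{(t+1)}$ whenever $e_j^{(t+1)}$ lies before $i_{j+2}^{(t+1)}$. If every $e_j^{(t)}$ lay past $i_{j+2}^{(t)}$ the count would be identically $1$ and $e_j$ would alternate with $i_{j+2}$, contradicting non-adjacency; so there must exist a witness period $t_0$ with $e_j^{(t_0)}\in(i_{j+1}^{(t_0)},i_{j+2}^{(t_0)})$. Now apply Proposition~\ref{pAdAndNotAdVertices}(2) to the triangle $\{e_j,e_{j+1},i_{j+1}\}$: since $e_{j+1}^{(t_0)}$ always lies past $i_{j+2}^{(t_0)}$, the witness $t_0$ shows $e_j^{(t_0)}<e_{j+1}^{(t_0)}$ between $i_{j+1}^{(t_0)}$ and $i_{j+1}^{(t_0+1)}$, which kills the reversed $\forall$-ordering. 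Hence $e_j^{(t)}<e_{j+1}^{(t)}$ in the $i_{j+1}$-period of index $t$ for every $j$ and every $t$.

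Finally, for a fixed $t$ these strict inequalities chain within the stretch from $i_1^{(t)}$ to $i_n^{(t+1)}$ (two consecutive blocks of $v$), giving $e_0^{(t)}<e_1^{(t)}<\cdots<e_n^{(t)}$ in the linear reading of $u$. The restriction of $u$ to $\{e_0,\dots,e_n\}$ is a uniform representant of the induced cycle $C_{n+1}$ with vertices labeled consecutively, and by this chain it contains $e_0 e_1\cdots e_n$ as a subword, contradicting Lemma~\ref{lCycleRep}. The main obstacle is the middle step: producing the witness period $t_0$ from the non-alternation count and correctly converting it, via Proposition~\ref{pAdAndNotAdVertices}(2), into the single correct $\forall$-ordering between $e_j$ and $e_{j+1}$ simultaneously for all $n+1$ triangles $\{e_j,e_{j+1},i_{j+1}\}$.
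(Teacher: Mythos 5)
Your proof is correct and follows essentially the same route as the paper's: use uniformity and the spoke clique to show the spoke restriction is $(i_0i_1\cdots i_n)^m$ (up to reversal), pin each rim letter $e_j$ into a fixed position relative to the spokes and its rim neighbours, and conclude that $\epsilon_E(u)$ contains $e_0e_1\cdots e_n$, contradicting Lemma~\ref{lCycleRep}. The only point deserving one more line is the final subword claim: to rule out occurrences from neighbouring periods interleaving between $e_0^{(t)}$ and $e_n^{(t)}$ you need the wrap-around inequality $e_n^{(t)}<e_0^{(t+1)}$, which is exactly the $j=n$ case of the chain you already established.
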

\begin{proof} Let us describe $L(W_{n+1})$ first. Denote edges of the big (external) cycle of the wheel $W_n$ by $e_{0},e_1,\ldots,e_n$ in consecutive order and internal edges that connect the inside vertex to the big cycle  by $i_0,i_1,\ldots, i_n$ so that an edge $i_{j}$ is adjacent to $e_j$ and $e_{j+1}$ for $0\leqslant j <n$ and $i_n$ is adjacent to $e_n$, $e_0$.

In the line graph $L(W_{n+1})$ the vertices $e_{0},e_1,\ldots,e_n$ form a cycle where they occur consecutively and the vertices $i_0,i_1,\ldots, i_n$ form a clique. In addition, vertices $i_{j}$ are adjacent to $e_j$, $e_{j+1}$ and $i_n$ is adjacent to $e_n$, $e_0$.

Now we suppose that $L(W_{n+1})$ is the alternating graph of some word that, due to Theorem~\ref{tRepByUni}, can be chosen to be uniform and deduce a contradiction with Lemma~\ref{lCycleRep}.

Let $E$ be the alphabet $\{e_j \;:\; 0\leqslant j \leqslant n\}$, $I$ be the alphabet $\{i_j \;:\; 0\leqslant j \leqslant n\}$ and a word $u$ on the alphabet $E\cup I$ be the uniform representant of $L(W_{n+1})$. Due to Proposition~\ref{pCShift}, we may assume $u_0=i_0$.

As we know from Proposition~\ref{pCliqueRep}, the word $\epsilon_I(u)$ is of the form $v^m$, where $v$ is $1$-uniform and $v_0=i_0$. Let us prove that $v$ is exactly $i_0i_1\ldots i_n$ or $i_0i_ni_{n-1}\ldots i_1$.

Suppose there are $1< \ell \not=k\leqslant n$ such that $\epsilon_{\{i_0,i_1,i_\ell,i_k\}}(v)=i_0 i_\ell i_1 i_k$. This implies that the statement $\forall (i_0-i_\ell<i_1<i_k-i_0)$ is true for $u$. The vertex $e_1$ is neither adjacent to $i_\ell$ nor to $i_k$. By Proposition~\ref{pAdAndNotAdVertices} this implies $\exists (i_0-e_1<i_\ell-i_0)$ and $\exists (i_0-i_k<e_1-i_0)$ are true for $u$. Taking into account the previous ``for all'' statement, we conclude that both of $\exists (i_0-e_1<i_1-i_0)$ and $\exists (i_0-i_1<e_1-i_0)$ are true for $u$, which contradicts Proposition~\ref{pAdAndNotAdVertices} applied to $i_0,i_1,e_1$. So, there are only two possible cases, i.e., $v=i_0i_1v_2\ldots v_n$ and $v=i_0v_1\ldots i_1$.

Using the same reasoning on a triple $i_j$, $i_{j+1}$, $e_{j+1}$, by induction on $j\geqslant 1$, we get $v=i_0i_1\ldots i_n$ for the first case and $v=i_0i_ni_{n-1}\ldots i_1$ for the second.

It is sufficient to prove the theorem only for the first case, since reversing a word preserves the alternating relation.

By Proposition~\ref{pAdAndNotAdVertices} exactly one of the statements $\forall (i_0-e_0<e_1-i_0)$, $\forall (i_0-e_1<e_0-i_0)$ is true for $u$. Let us prove that it is the statement $\forall (i_0-e_0<e_1-i_0)$.

Applying Proposition~\ref{pAdAndNotAdVertices} to the clique $\{i_0,i_1,e_1\}$ we have that exactly one of $\forall (i_0-i_1<e_1-i_0)$, $\forall (i_0-e_1<i_1-i_0)$ is true. Applying Proposition~\ref{pAdAndNotAdVertices} to $i_0,i_2,e_1$ we have that both of $\exists (i_0-e_1<i_2-i_0)$ and $\exists (i_0-i_2<e_1-i_0)$ are true. The statement $\forall (i_0-e_1<i_1-i_0)$ contradicts  $\exists (i_0-i_2<e_1-i_0)$ since we have $\forall (i_0-i_1<i_2-i_0)$. Hence $\forall (i_0-i_1<e_1-i_0)$ is true.

Now applying Proposition~\ref{pAdAndNotAdVertices} to $i_0$, $e_0$ and $i_1$ we, in particular, have $\exists (i_0-e_0<i_1-i_0)$. Taking into account $\forall (i_0-i_1<e_1-i_0)$ and Proposition~\ref{pAdAndNotAdVertices} applied to the clique $\{i_0,e_0,e_1\}$ we conclude that $\forall (i_0-e_0<e_1-i_0)$ is true. In other words, between two consecutive $i_0$ in $u$ there is $e_0$ that occurs before $e_1$.

Using the same reasoning, one can prove that the statement $i_n-e_n<e_0-i_n$ and the statements $i_j-e_j<e_{j+1}-i_j$ for each $j<n$ are true for $u$. Let us denote this set of statements by ($\ast$).

The vertex $e_0$ is not adjacent to the vertex $i_{n-1}$ but both of them are adjacent to $i_0$, hence, by Proposition~\ref{pAdAndNotAdVertices}, somewhere in $\epsilon_{\{e_0,i_{n-1},i_0\}}$ the word $i_{n-1}e_0i_0$ occurs. Taking into account what we already proved for $v$, this means that we found the structure $i_0-i_1-\ldots-i_{n-1}-e_0-i_n-i_0-i_1-\ldots-e_n$ in $u$, where symbols of $I$ do not occur in gaps denoted by ``$-$''.

Now inductively applying the statements ($\ast$), we conclude that in $u$ there is a structure $i_{n-1}-e_0-e_1-\ldots-e_n-i_{n-1}$ where no symbol $i_{n-1}$ occurs in the gaps. Suppose the symbol $e_0$ occurs somewhere in the gaps between $e_0$ and $e_n$. Since $e_0$ and $e_n$ are adjacent, that would mean that between two $e_0$ another $e_n$ also occurs and this contradicts the fact that $e_n$ and $i_{n-1}$ are adjacent. Again by induction, one may prove that no symbol of $E$ occurs in the gaps between $e_0$ and $e_n$ in the structure we found. In other words, $e_0e_1\ldots e_n$ occurs in the word $\epsilon_E(u)$ representing the cycle. This results in a contradiction with Lemma~\ref{lCycleRep} which concludes the proof. \end{proof}

\section{Line Graphs of Cliques}\label{nonReprCliques}

\begin{theorem} The line graph $L(K_{n})$ is not representable for each $n \geqslant 5$.
\end{theorem}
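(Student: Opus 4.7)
The plan is to reduce the theorem to the wheel case handled by Theorem~\ref{wheelNonRep}. The key observation is an elementary fact about line graphs: whenever $H$ is a subgraph of $G$ (not necessarily induced), the line graph $L(H)$ is an \emph{induced} subgraph of $L(G)$. This is because, for any two edges $e_1,e_2 \in E(H) \subseteq E(G)$, adjacency in either line graph amounts to the set-theoretic condition $e_1 \cap e_2 \neq \emptyset$, which is insensitive to the ambient graph. In particular, the set $E(H)$, as a subset of $V(L(G))$, carries exactly the edge set of $L(H)$.

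Given this, the proof is essentially a one-liner. For any $n \geqslant 5$, pick five vertices of $K_n$ and draw the wheel $W_4$ on them (a $4$-cycle together with a hub joined to all four cycle vertices); since $W_4$ has $8$ edges whereas $K_5$ has $10$, $W_4$ is a (non-induced) subgraph of $K_n$. By the observation above, $L(W_4)$ embeds into $L(K_n)$ as an induced subgraph. Theorem~\ref{wheelNonRep}, applied with its parameter $n=3$, tells us that $L(W_4)$ is not representable. As noted in the introduction, the property of being representable is hereditary under induced subgraphs, so $L(K_n)$ cannot be representable either.

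I do not foresee a genuine obstacle in this argument: the only nontrivial ingredient is the induced-subgraph observation for line graphs, which is immediate from the definition, together with the already-established Theorem~\ref{wheelNonRep}. No further analysis of cliques or induced cycles inside $L(K_n)$ is needed, which is perhaps surprising in view of the length of the proof for $L(W_{n+1})$ given in the previous section. The same reduction would also explain why $L(K_n)$ for $n \geqslant 5$ naturally inherits non-representability even though, for instance, $W_n$ itself is not an induced subgraph of $K_{n+1}$: the induced subgraph relation passes to line graphs under merely being a subgraph of the underlying graph.
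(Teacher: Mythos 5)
Your proof is correct, but it takes a genuinely different route from the paper. The paper proves non-representability of $L(K_5)$ directly and self-containedly: it fixes the clique $\{0,1,a,b\}$ in $L(K_5)$, uses Propositions~\ref{pCliqueRep} and~\ref{pAdAndNotAdVertices} to enumerate the possible relative orders of $0,1,b$ between consecutive occurrences of $a$ in a uniform representant, and derives a contradiction in each case by bringing in a vertex ($3$, $e$, or $2$) adjacent to some but not all of the clique. You instead reduce to Theorem~\ref{wheelNonRep}: since $W_4$ is a (non-induced) subgraph of $K_5\subseteq K_n$ and $L(H)$ is an induced subgraph of $L(G)$ whenever $H$ is a subgraph of $G$ (adjacency of two edges in a line graph depends only on whether they share an endpoint, not on the ambient graph), $L(W_4)$ sits inside $L(K_n)$ as an induced subgraph, and heredity of representability finishes the argument. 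Your key observation is sound and is in fact exactly the one the paper itself invokes later, in the proof of Theorem~\ref{thm:iterating}; your parameter bookkeeping is also right ($W_4$ has $5$ vertices and $8$ edges, and Theorem~\ref{wheelNonRep} with $n=3$ gives non-representability of $L(W_4)$). What your approach buys is brevity and a clear logical dependence of the clique result on the wheel result; what the paper's approach buys is independence --- its direct case analysis does not rest on the (considerably longer and more delicate) wheel proof, so the two sections provide two logically separate constructions of non-representable graphs rather than one.
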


\begin{proof} It is sufficient to prove the theorem for the case $n=5$ since, as one can prove, any $L(K_{n \geqslant 5})$ contains an induced $L(K_5)$.

Let $u$ be a representant of $L(K_{5})$ with its vertices labeled as shown in Fig.~\ref{fig:clique}.
Vertices $0,1,a,b$ make a clique in $L(K_{5})$. By applying Propositions~\ref{pCliqueRep} and \ref{pAdAndNotAdVertices} to this clique we see that exactly one of the following statements is true: $\forall (a-\{0,1\}<b-a)$, $\forall (a-b<\{0,1\}-a)$, $\forall (a-x<b<\overline{x}-a)$, where $x\in\{0,1\}$.

\textit{(Case 1)} Suppose $\forall (a-x<b<\overline{x}-a)$ is true. The vertex $3$ is adjacent to $a$, $b$, but not to $x$, $\overline{x}$. Applying Proposition~\ref{pAdAndNotAdVertices} we have that $\exists (a-3<\{0,1\}-a)$ and $\exists (a-\{0,1\}<3-a)$ are true. But between $x$,$\overline{x}$ there is $b$, so we have a contradiction $\exists (a-3<b-a)$, $\exists (a-b<3-a)$ with Proposition~\ref{pAdAndNotAdVertices}.

\textit{(Case 2a)} Suppose $\forall (a-b<0<1-a)$) is true. The vertex $e$ is adjacent with $a$, $0$, but not with $b$, $1$. Applying Proposition~\ref{pAdAndNotAdVertices} we have $\exists (a-e<b-a)$ and $\exists (a-1<e-a)$. Taking into account the case condition, this implies $\exists (a-e<0-a)$ and $\exists (a-0<e-a)$ which is a contradiction.

\textit{(Case 2b)} Suppose $\forall (a-b<1<0-a)$) is true. The vertex $2$ is adjacent with $a$, $1$, but not with $b$, $0$. Applying Proposition~\ref{pAdAndNotAdVertices} we have $\exists (a-2<b-a)$ and $\exists (a-0<2-a)$. Again, taking into account the case condition this implies $\exists (a-2<1-a)$ and $\exists (a-1<2-a)$, which gives a contradiction.

\textit{(Case 3a)} If $\forall (a-0<1<b-a)$) is true, a contradiction follows analogously to Case 2b.  

\textit{(Case 3b)} If $\forall (a-1<0<b-a)$) is true, a contradiction follows analogously to Case 2a.
\end{proof}

\begin{figure}[h]
\begin{center}
\begin{tikzpicture}[place/.style = {circle,draw = black!50,fill = gray!20,thick}, auto]

 \node [place] (0) at (90:2)      {};
 \node [place] (4) at (90+72:2)   {};
 \node [place] (3) at (90+2*72:2) {};
 \node [place] (2) at (90+3*72:2) {};
 \node [place] (1) at (90+4*72:2) {};

 \draw [-,semithick] (0) to node [auto,swap] {$d$} (4);
 \draw [-,semithick] (4) to node [auto,swap] {$e$} (3);
 \draw [-,semithick] (3) to node [auto,swap] {$a$} (2);
 \draw [-,semithick] (2) to node [auto,swap] {$b$} (1);
 \draw [-,semithick] (1) to node [auto,swap] {$c$} (0);

 \draw [-,semithick] (0) to node [auto,swap, yshift = -4] {$2$} (3);
 \draw [-,semithick] (4) to node [auto,swap] {$0$} (2);
 \draw [-,semithick] (3) to node [auto,swap] {$3$} (1);
 \draw [-,semithick] (2) to node [auto,swap, yshift = -4] {$1$} (0);
 \draw [-,semithick] (1) to node [auto,swap] {$4$} (4);

\end{tikzpicture}\quad
\begin{tikzpicture}[place/.style = {circle,draw = black!50,fill = gray!20,thick}, auto]

 \node [place] (e0) at (0+90:3)    {};
 \node [black,above] at (e0.north) {$c$};
 \node [place] (e1) at (72+90:3)   {};
 \node [black,left] at (e1.west) {$d$};
 \node [place] (e2) at (144+90:3)  {};
 \node [black,below] at (e2.south) {$e$};
 \node [place] (e3) at (216+90:3)  {};
 \node [black,below] at (e3.south) {$a$};
 \node [place] (e4) at (288+90:3)  {};
 \node [black,right] at (e4.east) {$b$};


 \node [place] (1) at (0+54:1)   {};
 \node [black,above] at (1.north) [xshift = 5] {$1$};
 \node [place] (4) at (72+54:1)  {};
 \node [black,above] at (4.north) [xshift = -5] {$4$};
 \node [place] (2) at (144+54:1) {};
 \node [black,left] at (2.west) [yshift = -5] {$2$};
 \node [place] (0) at (216+54:1) {};
 \node [black,below] at (0.south) {$0$};
 \node [place] (3) at (288+54:1) {};
 \node [black,right] at (3.east) [yshift = -5] {$3$};

 \draw [-,gray=25!] (e0) to (e1);
 \draw [-,gray=25!] (e1) to (e2);
 \draw [-,thick] (e2) to (e3);
 \draw [-,thick] (e3) to (e4);
 \draw [-,gray=25!] (e4) to (e0);

 \draw [-,thick] (1) to (2);
 \draw [-,thick] (1) to (0);
 \draw [-,gray=25!] (4) to (0);
 \draw [-,gray=25!] (4) to (3);
 \draw [-,gray=25!] (2) to (3);

 \draw [-,gray=25!, bend left=35] (0) to (e1);
 \draw [-,thick] (0) to (e2);
 \draw [-,thick] (0) to (e3);
 \draw [-,thick, bend right=35] (0) to (e4);
 \draw [-,thick, bend left=35] (1) to (e3);
 \draw [-,thick] (1) to (e4);
 \draw [-,gray=25!] (1) to (e0);
 \draw [-,gray=25!, bend right = 35] (1) to (e1);
 \draw [-,gray=25!] (4) to (e0);
 \draw [-,gray=25!] (4) to (e1);
 \draw [-,gray=25!, bend left = 35] (4) to (e4);
 \draw [-,gray=25!, bend right = 35] (4) to (e2);
 \draw [-,gray=25!] (2) to (e1);
 \draw [-,gray=25!] (2) to (e2);
 \draw [-,gray=25!, bend left = 35] (2) to (e0);
 \draw [-,thick, bend right = 35] (2) to (e3);
 \draw [-,thick] (3) to (e3);
 \draw [-,thick] (3) to (e4);
 \draw [-,gray=25!, bend right = 35] (3) to (e0);
 \draw [-,gray=25!, bend left = 35] (3) to (e2);

\end{tikzpicture}
 \caption{The clique $K_5$ and its line graph, where edges mentioned in the proof of Theorem~\ref{nonReprCliques}
 are drawn thicker}
 \label{fig:clique}
\end{center}
\end{figure}
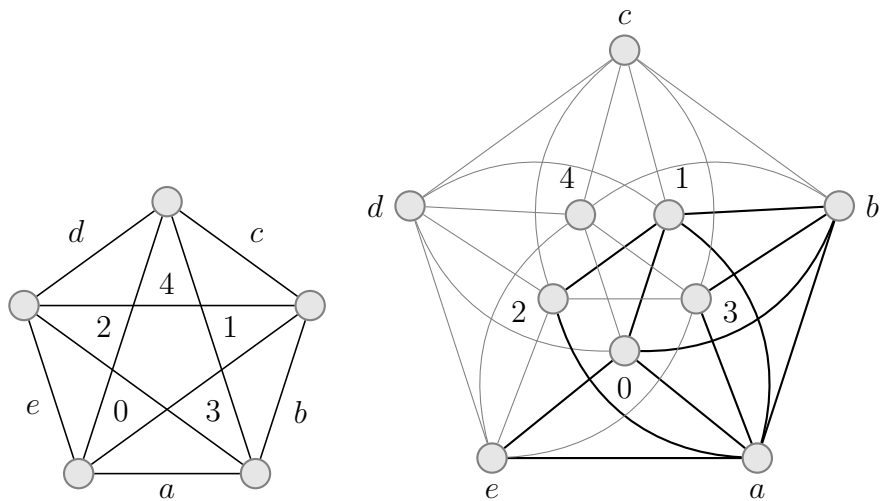

\section{Iterating the Line Graph Construction}\label{iteratingLineGraph}

It was shown by van Rooji and Wilf \cite{WvR} that iterating the line graph operator on most graphs results in a sequence of graphs which grow without bound. The exceptions are cycles, which stay as cycles of the same length, the claw graph $K_{1,3}$, which becomes a triangle after one iteration and then stays that way, and paths, which shrink to the empty graph. This unbounded growth results in graphs that are non-representable after a small number of iterations of the line graph operator since they contain the line graph of a large enough clique. A slight modification of this idea is used to prove our main result. 

\begin{theorem} \label{thm:iterating}
If a connected graph $G$ is not a path, a cycle, or the claw graph $K_{1,3}$, then $L^n(G)$ is not representable for $n \geqslant 4$.
\end{theorem}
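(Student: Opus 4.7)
The plan is to exhibit, as an induced subgraph of $L^n(G)$, a copy of one of the known non-representable graphs from the previous sections: either $L(K_t)$ for some $t \geq 5$ or $L(W_m)$ for some $m \geq 4$. Since representability is hereditary, the presence of any such induced subgraph forces $L^n(G)$ to be non-representable. The key reduction is the elementary observation that if $H$ is a subgraph (not necessarily induced) of $H'$, then $L(H)$ is an induced subgraph of $L(H')$: adjacency in the line graph depends only on the shared-endpoint relation between two edges, which is intrinsic to that pair. Thus it suffices to locate $K_5$ or some $W_m$ ($m \geq 4$) as a subgraph of $L^{n-1}(G)$.

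I would then establish two propagation lemmas. First, any graph $H$ with $\Delta(H) \geq 5$ contains an induced $K_5$ in $L(H)$, realized by the five edges incident to a maximum-degree vertex. Second, $L(W_m)$ contains $W_{m+1}$ as a (non-induced) subgraph for every $m \geq 3$: letting $c$ be the hub of $W_m$, take the edge $cv_1$ as new hub, and inspect the Hamilton cycle $cv_2, cv_3, \ldots, cv_m, v_1 v_m, v_1 v_2$ among its $m+1$ neighbors in $L(W_m)$. Combined with subgraph inheritance, the second lemma shows that once a wheel of size at least $4$ is embedded in some iterate $L^k(G)$, the wheel $W_{m+j}$ appears as a subgraph of $L^{k+j}(G)$ for every $j \geq 0$, so the non-representable graph $L(W_{m+j})$ appears as an induced subgraph of $L^{k+j+1}(G)$.

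The main step, and the principal technical obstacle, is the seeding: to show that for every connected graph $G$ which is not a path, cycle, or claw, some iterate $L^j(G)$ with $j \leq 3$ already contains $K_5$ or some $W_m$ ($m \geq 4$) as a subgraph. I would invoke van Rooji and Wilf's theorem that such $G$ have unboundedly growing iterated line graphs, together with a monotonicity lemma stating that $\Delta(L(H)) \geq \Delta(H)$ for every connected $H$ which is not a star (a one-line consequence of $\deg_{L(H)}(uv) = \deg_H(u) + \deg_H(v) - 2$ and the fact that a max-degree vertex in a non-star must have some neighbor of degree $\geq 2$). A short case analysis handles the slowest-growing small examples: for the paw (triangle with a pendant) one computes $L^2(\mathrm{paw}) = W_4$, whence $L^3(\mathrm{paw}) = L(W_4) \supseteq W_5$ as a subgraph; for $K_{1,4}$ one has $L^2(K_{1,4}) = K_{2,2,2}$, in which every vertex has four neighbors inducing $K_{2,2} = C_4$ (giving $W_4$); for the diamond $K_4 - e$ one already has $L(\mathrm{diamond}) = W_4$; and a handful of similar verifications cover the remaining boundary cases. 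Once such a seed is in place, the propagation lemmas carry non-representability forward into every $L^n(G)$ with $n \geq 4$, completing the proof.
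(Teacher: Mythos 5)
Your overall strategy---locating $L(W_m)$ or $L(K_5)$ as an induced subgraph of $L^n(G)$ via the observation that $H\subseteq H'$ as a subgraph forces $L(H)$ to be an induced subgraph of $L(H')$---is exactly the paper's, and your propagation devices are sound. In particular the lemma that $L(W_m)$ contains $W_{m+1}$ as a subgraph (the closed neighborhood of a spoke) is correct and is a genuinely different, and rather elegant, way to push non-representability from $L^{k}(G)$ to all later iterates; the paper instead observes that the paw is a subgraph of its own line graph (the diamond), so that a paw, once present, persists forever and yields $L(W_4)$ three iterations later.

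The gap is in the seeding step. You must show that \emph{every} connected $G$ outside the excluded families has some $L^j(G)$, $j\leqslant 3$, containing $W_4$ or $K_5$ as a subgraph, and your case list (paw, diamond, $K_{1,4}$, ``and a handful of similar verifications'') is not shown to be exhaustive. The triangle-free graphs of maximum degree $3$ --- for instance $K_{1,3}$ with one edge subdivided, the ``chair'' --- contain none of your listed seeds, are not paths, cycles, claws or stars, and are in fact the extremal case: they reach $W_4$ only at $j=3$. The appeal to van Rooij--Wilf together with $\Delta(L(H))\geqslant\Delta(H)$ cannot close this, since unbounded growth gives no control on \emph{how many} iterations are needed, and starting from $\Delta=3$ the degree bound alone does not force $\Delta\geqslant 5$ (hence an induced $K_5$) until well past the fourth iterate. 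The missing ingredient is a single structural lemma, which is how the paper finishes: every connected graph that is not a path, a cycle, $K_{1,3}$, or a star contains the chair or the paw as a subgraph (a degree-$3$ vertex either has two adjacent neighbors, giving a paw, or sees a vertex at distance two, giving a chair); one then checks $L(\mathrm{chair})=\mathrm{paw}$, $L(\mathrm{paw})=\mathrm{diamond}$, $L(\mathrm{diamond})=W_4$, and treats stars separately via $L(K_{1,k})=K_k\supseteq\mathrm{paw}$ for $k\geqslant 4$. With that lemma inserted, your propagation machinery does complete the proof.
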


\begin{proof}
Note that if $H$ appears as a subgraph of $G$ (not necessarily induced), then $L^n(H)$ is isomorphic to an induced subgraph of $L^n(G)$ for all $n \geqslant 1$. 

We first consider the sequence of graphs in Fig.~\ref{fig:iterating}. All but the leftmost graph are obtained by applying the line graph operator to the previous graph. The last graph in the sequence is $W_4$, and by Theorem~\ref{wheelNonRep}, $L(W_4)$ is non-representable. 
 
Now, let  $G=(V,E)$ be a graph that is not a star and that satisfies the conditions of the theorem.  $G$ contains as a subgraph an isomorphic copy of either the leftmost graph of Fig.~\ref{fig:iterating} or the second graph from the left. Thus $L^3(G)$, or respectively $L^4(G)$, is not representable, since it contains an induced line graph of the wheel $W_4$. 

If $G$ is a star $S_{k \geqslant 4}$ then $L(G)$ is the clique $K_{k}$ and there is an isomorphic copy of the second from the left graph of  Fig.~\ref{fig:iterating} in $G$, and $L^4(G)$ is not representable again.

Note that there is an isomorphic copy of the second graph of Fig.~\ref{fig:iterating} inside the third one. Therefore the same reasoning can be used for $L^{4+k}(G)$ for each $k \geqslant 1$, which concludes the proof.
\end{proof}

\begin{figure}[h]
\begin{center}
\begin{tikzpicture}[place/.style = {circle,draw = black!50,fill = gray!20,thick}, auto]


\draw [->,thick] (3.5,0) to node {$L$} (4.5,0);
\draw [->,thick] (7.5,0) to node {$L$} (8.5,0);
\draw [->,thick] (11.5,0) to node {$L$} (12.5,0);

\begin{scope}
 \node [place] (1) at (90:1)  {};
 \node [place] (2) at (270:1) {};
 \node [place] (3) at (0:1)   {};
 \node [place] (4) at (0:2)   {};
 \node [place] (5) at (0:3)   {};

 \draw [-,semithick] (2) to (3);
 \draw [-,semithick] (3) to (1);
 \draw [-,semithick] (3) to (4);
 \draw [-,semithick] (4) to (5);
\end{scope}

\begin{scope}[xshift=5cm]
 \node [place] (1) at (90:1)  {};
 \node [place] (2) at (270:1) {};
 \node [place] (3) at (0:1)   {};
 \node [place] (4) at (0:2)   {};

 \draw [-,semithick] (1) to (2);
 \draw [-,semithick] (2) to (3);
 \draw [-,semithick] (3) to (1);
 \draw [-,semithick] (3) to (4);
\end{scope}

\begin{scope}[xshift=10cm]

 \node [place] (1) at (90:1)  {};
 \node [place] (2) at (180:1) {};
 \node [place] (3) at (270:1) {};
 \node [place] (4) at (0:1)   {};

 \draw [-,semithick] (1) to (2);
 \draw [-,semithick] (2) to (3);
 \draw [-,semithick] (3) to (4);
 \draw [-,semithick] (4) to (1);
 \draw [-,semithick] (1) to (3);

\end{scope}

\begin{scope}[xshift=14cm]

 \node [place] (1) at (90:1)  {};
 \node [place] (2) at (180:1) {};
 \node [place] (3) at (270:1) {};
 \node [place] (4) at (0:1)   {};
 \node [place] (0) at (0,0)   {};

 \draw [-,semithick] (1) to (2);
 \draw [-,semithick] (2) to (3);
 \draw [-,semithick] (3) to (4);
 \draw [-,semithick] (4) to (1);
 \draw [-,semithick] (0) to (1);
 \draw [-,semithick] (0) to (2);
 \draw [-,semithick] (0) to (3);
 \draw [-,semithick] (0) to (4);
\end{scope}

\end{tikzpicture}
 \caption{Iterating the line graph construction}
 \label{fig:iterating}
\end{center}
\end{figure}
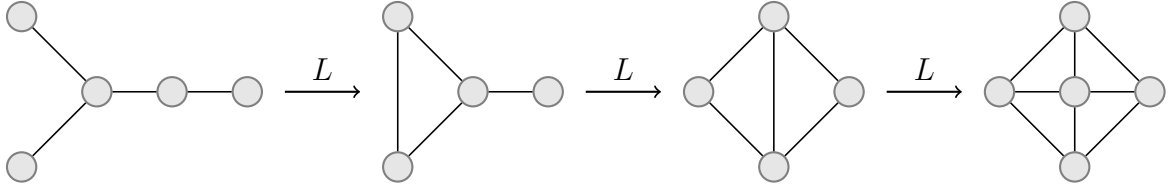

\section{Some Open Problems}\label{finalRemarks}

We have the following open questions.

\begin{itemize}
\item Is the line graph of a non-representable graph always non-representable?
\end{itemize}
Our Theorem~\ref{thm:iterating} shows that for any graph $G$, that is not a path, a cycle, or the claw $K_{1,3}$, the graph $L^{n}(G)$ is non-representable for all $n \geqslant 4$. It might be possible to find a graph $G$ such that $G$ is non-representable while $L(G)$ is.

\begin{itemize}
\item How many graphs on $n$ vertices stay non-representable after at most $i$ iterations, $i = 0,1,2,3,4$?
\end{itemize}
For a graph $G$ define $\xi(G)$ as the smallest integer such that $L^{k}(G)$ is non-representable for all
$k \geqslant \xi(G)$. Theorem~\ref{thm:iterating} shows that $\xi(G)$ is at most $4$, for a graph that
is not a path, a cycle, nor the claw $K_{1,3}$, while paths, cycles and the claw have $\xi(G) = + \infty$.

\begin{itemize}
\item Is there a finite classification of prohibited subgraphs in a graph G determining whether $L(G)$
is representable?
\end{itemize}
There is a classification of prohibited induced subgraphs which determine whether a graph $G$ is the line graph of some other graph $H$. It would be nice to have such a classification, if one exists, to determine if $L(G)$ is representable.


\begin{thebibliography}{5}
\bibitem{GZ} R. Graham and N. Zang, Enumerating split-pair arrangements,
\emph{J. Combin. Theory, Series A} {\bf 115}, Issue 2 (2008), 293--303.
\bibitem{HKP1} M. Halld\'orsson, S. Kitaev and A. Pyatkin: On representable graphs, semi-transitive orientations, and the representation numbers, arXiv:0810.0310v1 [math.CO], 2008.
\bibitem{HKP2} M. Halld\'orsson, S. Kitaev and A. Pyatkin: Graphs capturing alternations in words, DLT 2010, Y. Gao, H. Lu, S. Seki, S. Yu (Eds.): {\em Lecture Notes in Computer Science} {\bf 6224} (2010) 436--437.
\bibitem{KP} S. Kitaev and A. Pyatkin: On representable graphs, {\em Automata, Languages and Combinatorics} {\bf 13} (2008) 1, 45--54.
\bibitem{KS} S. Kitaev and S. Seif: Word problem of the Perkins semigroup via directed acyclic graphs, {\em Order}, DOI 10.1007/s11083-008-9083-7 (2008).
\bibitem{WvR} A.C.M van Rooij and H.S. Wilf. The interchange graph of a finite graph. \emph{Acta Mathematica Academiae Scientiarum Hungaricae} Volume 16 (1965), 263--269.
\end{thebibliography}
\end{document}